\documentclass[11pt,a4paper]{amsart}
\usepackage{amssymb}

\newtheorem{theorem}{Theorem}[]
\newtheorem{cor}{Corollary}
\newtheorem{lemma}{Lemma}

\newtheorem{problem}{Problem}

\renewcommand{\geq}{\geqslant}
\renewcommand{\leq}{\leqslant}

\renewcommand{\P}{\mathbb{P}}

\newcommand{\R}{\mathbb{R}}

\newcommand{\E}{\mathbb{E}}

\newcommand{\tr}{\mathrm{tr }}
\newcommand{\cov}{\mathrm{Cov }}

\newcommand{\erf}{\mathrm{Erf}}

\newcommand{\la}{\lambda}
\newcommand{\LL}{\mathcal{L}}
\newcommand{\A}{\mathcal{A}}

\newcommand{\vol}{\mathrm{Vol\,}}
\newcommand{\conv}{\mathrm{conv\,}}

\begin{document}

\title{Extremal cross-polytopes and Gaussian vectors}

\author{Gergely Ambrus}
\email[G. Ambrus]{ambrus@renyi.hu}

\address{Alfr\'ed R\'enyi Institute of Mathematics, Hungarian Academy of Sciences, Re\'altanoda u. 13-15, 1053 Budapest, Hungary}
\thanks{Research was supported by OTKA grants 75016 and
76099.}
\keywords{Orthogonal crosspolytopes, Gaussian vectors.}
\subjclass[2010]{52A40(primary), and 60D05(secondary)}

\maketitle

\begin{abstract}
For $n \geq 1$, let  $\xi_1, \dots, \xi_n $ be independent, identically
distributed standard normal variables. Among nonnegative real vectors $u =(u_1,
\dots, u_n)$ of norm 1, the quantity $\E  \|(u_1 \xi_1, \ldots, u_n \xi_n \|_\infty$ is maximised when $u$ has at most two non-zero entries, and it is minimised when $u$ is proportional to $(1, \ldots, 1)$. Further generalisations of this result are also discussed. As a corollary, a lower bound on the mean width of a general
convex body $K$ is derived in terms of the successive inner radii of $K$.
\end{abstract}

\section{Context and motivation}

Let $K$ be a convex body in $\R^n$. A natural way to measure how close $K$ is
to a ball is to relate its volume to that of the largest ball inscribed in $K$,
or to the smallest ball circumscribed about $K$. Equivalently, one may as well
relate $\vol(K)$ to the inradius of $K$ and to the circumradius of $K$.
However, the best possible estimates in this case are the trivial ones.

More interesting inequalities can be obtained by taking into account the
successive inner and outer radii of $K$. These are defined as follows. Let
$r(K)$ and $R(K)$ denote the inradius and the circumradius of $K$. Furthermore,
let $\A_i^n$ denote the set of $i$-dimensional affine subspaces of $\R^n$, and
for a subspace $L \in \A_i^n$, denote by $K|L$ the orthogonal projection of $K$
onto $L$. The successive inner and outer radii of $K$ for $1 \leq i \leq n$ are
given by
\[
r_i(K) = \max_{L \in \A_i^n} \, r(K \cap L) \  \textrm{  and  } \ R_i(K) =
\min_{L \in \A_i^n} \, R(K | L)\, .
\]
Note that $r_n(K)=r(K)$, $R_n(K)=R(K)$, $2 r_1(K)$ is the diameter of $K$,
and $2 R_1(K)$ is the minimum width of $K$.

 We also introduce the intrinsic volumes of $K$ for $0 \leq i \leq n$
by
\begin{equation}\label{vi}
V_i(K) = \frac{{n \choose i} \kappa_n}{\kappa_i \kappa_{n-i}} \int_{\LL_i^n}
\vol_i(K|L) \, d\mu_i(L),
\end{equation}
where $\LL_i^n$ is the Grassmannian of all $i$-dimensional linear subspaces of
$\R^n$, equipped with the unique Haar probability measure $\mu_i$, and
$\kappa_n$ is the volume of $B^n$, the unit ball of dimension $n$:
\[
\kappa_n = \frac {\pi ^ {n/2}} {\Gamma(1 + \frac n 2)}\,.
\]
Alternatively, $V_i(K)$ may be expressed as the coefficients in Steiner's
formula
\[
\vol(K + \lambda B^n) = \sum_{i=0}^d \lambda^{n-i} \kappa_{n-i} V_i(K)\,.
\]
The most well-known special cases are: $V_n(K) = \vol (K)$; $2 V_{n-1}(K)$ is
the surface area of $K$; $2 \kappa_{n-1} / (n \kappa_n) V_1(K)$ is the mean
width of $K$; and $V_0(K) = 1$ is the Euler characteristic. For further
references, see Gruber \cite{G07} or Schneider \cite{Sch93}.

Since the intrinsic volumes are the average volumes of projections of $K$ onto
lower dimensional subspaces, it is natural to expect a relationship between
these and the successive radii of $K$.  This link was established by M. Henk
and M. Hern\'andez Cifre \cite{HC08} who proved that the following inequalities
hold:
\begin{align}
 V_i(K) &\leq 2^i s_i (R_1(K), \dots, R_n(K)) \ \textrm{ for every
} 0 \leq i \leq n \, ; \label{ViRi} \\
V_{n-1} (K) &\geq \frac {2^{n-1}}{(n-1)!} \sqrt{s_{n-1}(r_1(K)^2, \dots,
r_n(K)^2)} \, ;\notag \\
V_{n-2} (K) &\geq \frac{2 \sqrt{2}}{\pi} \frac {2^{n-2}}{(n-2)!}
\sqrt{s_{n-2}(r_1(K)^2, \dots, r_n(K)^2)} \, ,\notag
\end{align}
where $s_i$ stands for the $i$th elementary symmetric polynomial:
\[
s_i(\la_1, \dots, \la_n) = \sum_{1 \leq k_1 < \dots < k_i \leq n} \la_{k_1}
\dots \la_{k_i}\, .
\]
The upper bound \eqref{ViRi} is sharp. In order to derive the lower bounds on
$V_i(K)$, the authors of \cite{HC08} apply a sequence of Steiner
symmetrisations to $K$, leading to a convex body $\widetilde{K}$ which contains
the orthogonal cross-polytope spanned by $\pm \, r_1(K)\, e_1, \dots, \pm \,
r_n(K)\,e_n$, where $(e_i)_1^n$ is the standard basis of $\R^n$. Using that
Steiner symmetrisations do not increase the intrinsic volumes (see e.g.
\cite{G07}), one arrives at the following bound:
\begin{equation}\label{ViC}
V_i(K) \geq V_i (C_n(r_1(K), \dots, r_n(K)) )\,,
\end{equation}
where
\[
C_n(\la_1, \dots ,\la_n) = \conv ( \pm \la_i e_i : i = 1, \dots, n )\,.
\]
 Thus, in order to derive the
best bounds provided by this method, one faces the following question:

\begin{problem} \label{pr1}
For $1 \leq i \leq n$, determine the vectors $u=(u_1, \dots, u_n) \in \R^n_+$
minimising
\begin{equation}\label{Vicross}
\frac {V_i (C_n(u_1, \dots, u_n) )}{\sqrt{s_i(u_1^2, \dots, u_n^2)}} \,.
\end{equation}
\end{problem}
We note that the quantity \eqref{Vicross} is invariant under scaling (see e.g.
Corollary 2.1. of \cite{HC08}), thus we may assume that $u \in S^{n-1}$. It is
proved in \cite{HC08} that for $i= n-1$ and $i=n-2$, the minimum of
\eqref{Vicross} is attained when $u$ is a multiple of $(1, \dots, 1)$, that is,
when the cross-polytope is regular. The authors also conjecture that the same
statement should hold for every $i$.

In the present note, we settle the $i=1$ case of Problem~\ref{pr1}, showing
that the minimum is attained in the regular case, whereas the maximum is
attained when the cross-polytope is at most 2 dimensional. Perhaps it is more
convenient to formulate an equivalent question about Gaussian vectors.

\begin{problem}\label{pr2}
Let $n \geq 1$, and  $\xi_1, \dots, \xi_n $ be independent, identically
distributed standard normal variables.  Determine the unit vector $u =(u_1,
\dots, u_n) \in \R^n$ with non-negative coordinates which minimises
\begin{equation}\label{exp}
\E \max_{1 \leq i \leq n} \{ |u_i \xi_i| \}.
\end{equation}
\end{problem}

\noindent
Denoting the random vector $(\xi_1, \dots, \xi_n)$ by $\xi$ and introducing the
 Hadamard product $v \odot w$ of $v, w \in \R^n$ by
\[
(v \odot w)_i = v_i w_i,
\]
the quantity in \eqref{exp} becomes  $\E \| u \odot \xi \|_\infty$. Note that
$u \odot \xi$ is an $n$-dimensional Gaussian random vector with independent
coordinates, whose covariance matrix has trace 1.

The equivalence of Problem~\ref{pr2} and the $i=1$ case of Problem \ref{pr1}
follows by the following standard transformation. For a convex body $K$ in
$\R^n$ and $x \in S^{n-1}$, let $h_K(x)$ denote the support function and
$\rho_K(x)$ the radial function of $K$. If $K$ is symmetric, $\|.\|_K$ denotes
the assigned norm with unit ball $K$. $K^*$ stands for the polar body of $K$.
Furthermore, $\sigma(x) = \sigma_{k-1}(x)$ denotes the $(k-1)$--dimensional
surface (Lebesgue) measure on $S^{k-1}$; note that $\sigma$ is a scaled copy of
the rotationally invariant probablity measure on $S^{k-1}$, with total mass $k
\kappa_k$. With these conventions, using \eqref{vi},
\begin{align*}
\kappa_{n-1}V_1(K) &= \int_{S^{n-1}} h_K(x) d \sigma (x) =\int_{S^{n-1}} \frac
1 {\rho_{K^*}(x)} d \sigma (x) = \int_{S^{n-1}} \|x\|_{K^*}d \sigma (x)\,.
\end{align*}
Since $C_n^*(u_1, \dots, u_n)$ is a rectangular box of half-axes $e_1/u_1,
\dots, e_n/u_n$,
\begin{align}
V_1 (C_n(u_1, \dots, u_n) ) &= \frac 1 {\kappa_{n-1}} \int_{S^{n-1}} \|u \odot x \|_\infty d\sigma (x) \notag\\
&= \frac 1 {(2 \pi)^{(n-1)/2} } \int_{S^{n-1}} \| u \odot x \|_\infty
\int_0^\infty e^{-r^2/2} r^n dr \, d\sigma(x)\notag \\
&= \frac {\sqrt{2 \pi}} {(2 \pi)^{n/2}} \int_{\R^n} e^{- |x|^2 / 2} \, \| u
\odot x
\|_\infty dx \label{v1ex}\\
&= \sqrt{2 \pi} \, \E \| u \odot \xi \|_\infty\,. \notag
\end{align}

Relaxing the independence condition of Problem~\ref{pr2}, we can ask the
following, more general question. We call a multivariate random variable {\em
centred}, if the mean values of its coordinate variables are 0.

\begin{problem}\label{pr3}
Among the $n$-dimensional centred Gaussian random vectors $X$ satisfying $
\tr \;\cov \, X =1$, which ones minimise and maximise $\E \| X \|_\infty$?
\end{problem}

\noindent Problem~\ref{pr2} is a special case of Problem~\ref{pr3} when $\cov X$ is
assumed to be diagonal.

We answer Problems \ref{pr2} and \ref{pr3} formulated above. By {\em standard normal vector} we understand a vector with i.i.d. standard normal coordinate variables.

\begin{theorem}\label{th1} Let $\xi = (\xi_i)_1^n$ be an $n$-dimensional
standard normal vector, and let $u \in \R^n$
be a unit vector. For $n=2$, the expectation $ \E \| u \odot \xi \|_\infty $ is
independent of the choice of $u$. For $n\geq 3$, the expectation is maximised
when at most two coordinates of $u$ are non-zero, and it is minimised when $
u=(\pm 1/\sqrt{n}, \dots, \pm 1/\sqrt{n})$.
\end{theorem}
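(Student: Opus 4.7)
\medskip
\noindent\textbf{Proof plan.} Since $\|u\odot\xi\|_\infty$ depends only on $|u_i|$, assume $u\in\R^n_+$. The case $n=2$ is immediate from the identity $\max(|a|,|b|) = (|a+b|+|a-b|)/2$: when $u_1^2+u_2^2=1$, both $u_1\xi_1\pm u_2\xi_2$ are standard normal, so $\E\|u\odot\xi\|_\infty = \sqrt{2/\pi}$ regardless of $u$. For $n\geq 3$, I would view $F(u):=\E\|u\odot\xi\|_\infty$ as a symmetric function of $s=(u_1^2,\dots,u_n^2)$ on the unit simplex, and prove that $F$ is Schur-convex, strictly so on every open face of dimension at least two. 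By the $n=2$ identity $F$ is constant, equal to $\sqrt{2/\pi}$, on each edge of the simplex; combined with Schur-convexity, this pins the maximum exactly on the $2$-sparse set and the minimum uniquely at the barycentre $(1/n,\dots,1/n)$.

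By symmetry the Schur-convex claim reduces to the two-variable assertion: fix $s_3,\dots,s_n$ and $c=s_1+s_2$, set $(s_1,s_2)=(c/2+\sigma,c/2-\sigma)$, and show that $F$ is increasing in $\sigma\in[0,c/2]$. Writing $N:=\max(|u_1\xi_1|,|u_2\xi_2|)$ and $M:=\max_{i\geq 3}|u_i\xi_i|$ (independent), we have
\[
F = \int_0^\infty\bigl(1 - g(t)\,h(t;\sigma)\bigr)\,dt,
\]
where $g(t)=\PP(M\leq t)$ is fixed and non-decreasing (strictly so whenever some $s_i>0$ with $i\geq 3$), and $h(t;\sigma)=\erf\bigl(t/\sqrt{c+2\sigma}\bigr)\erf\bigl(t/\sqrt{c-2\sigma}\bigr)$. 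The $n=2$ identity gives $\int_0^\infty(1-h(t;\sigma))\,dt=\E N=\sqrt{2c/\pi}$ independently of $\sigma$, hence $\int_0^\infty\partial_\sigma h(t;\sigma)\,dt=0$.

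The crux is the sign structure of $\partial_\sigma h$. Direct differentiation yields
\[
\partial_\sigma h(t;\sigma) = \frac{2t}{\sqrt{\pi}}\!\left[\frac{e^{-t^2/b^2}\,\erf(t/a)}{b^3} - \frac{e^{-t^2/a^2}\,\erf(t/b)}{a^3}\right]\!,\quad a:=\sqrt{c+2\sigma},\ b:=\sqrt{c-2\sigma},
\]
which is positive for small $t$ (Taylor expansion of $\erf$ at $0$) and negative for large $t$ (the faster decay of $e^{-t^2/b^2}$ kills the first bracketed term). Ruling out further sign changes is equivalent, after rearrangement, to strict monotonicity in $x>0$ of the one-variable function
\[
Q(x) := \frac{\erf(kx)\,e^{k^2 x^2}}{\erf(x)\,e^{x^2}}, \qquad k:=a/b>1,
\]
which I would establish by logarithmic differentiation together with sharp bounds on the Mills ratio $e^{-x^2}/\erf(x)$. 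This one-variable monotonicity is the principal technical obstacle.

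Granted a unique crossing at some $t^*>0$, monotonicity of $g$ gives $g(t)\partial_\sigma h(t;\sigma)\leq g(t^*)\partial_\sigma h(t;\sigma)$ both on $\{t<t^*\}$ and on $\{t>t^*\}$. Integrating and using $\int_0^\infty\partial_\sigma h\,dt=0$ produces
\[
\int_0^\infty g(t)\,\partial_\sigma h(t;\sigma)\,dt \;\leq\; g(t^*)\int_0^\infty\partial_\sigma h(t;\sigma)\,dt \;=\; 0,
\]
with strict inequality whenever $g$ is non-constant. Hence $\partial_\sigma F\geq 0$ on $[0,c/2]$, strictly so whenever some $s_i>0$ with $i\geq 3$, which is exactly the desired Schur-convex statement and completes the proof.
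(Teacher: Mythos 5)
Your architecture is attractive and genuinely different from the paper's: the paper finds the critical points on $S^{n-1}$ by Lagrange multipliers (all non-zero coordinates equal, via a monotonicity lemma for $x\mapsto e^{x^2/2}x^{-1}\int_0^xe^{-t^2/2}dt$) and then compares the resulting values $E_k$ by a second one-parameter monotonicity argument, while you propose the closed $n=2$ identity $\max(|a|,|b|)=\tfrac12(|a+b|+|a-b|)$ plus Schur-convexity in $(u_1^2,\dots,u_n^2)$ via a single-crossing argument against the fixed CDF $g$, using $\int_0^\infty\partial_\sigma h\,dt=0$. If the single-crossing claim were available, your concluding integration step and the deduction of both the exact maximiser set and the unique minimiser are correct.

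However, there is a genuine gap exactly where you flag it: the strict monotonicity of $Q(x)=\erf(kx)e^{k^2x^2}/\bigl(\erf(x)e^{x^2}\bigr)$ is not proved, and it carries the entire analytic content of the argument. Logarithmic differentiation reduces it to showing that
\[
y\;\longmapsto\;\frac{2y\,e^{-y^2}}{\sqrt{\pi}\,\erf(y)}+2y^2
\]
is increasing on $(0,\infty)$, equivalently (after rescaling) that $x\mapsto x e^{-x^2/2}\big/\!\int_0^x e^{-t^2/2}dt\;+\;x^2$ is increasing. This is a sum of a decreasing and an increasing function, so it does not follow from soft considerations, nor from generic Mills-ratio bounds without a quantitative estimate of the form $-\frac{d}{dx}\bigl(xe^{-x^2/2}/\!\int_0^xe^{-t^2/2}dt\bigr)\leq 2x$. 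It is also not a consequence of the paper's Lemma~\ref{lemm1} (which, at $q=2$, only says that the reciprocal of your first summand is increasing, i.e.\ that a certain ratio exceeds $1$, not that your two-point ratio $Q$ is monotone in $x$); it is rather an analogue of that lemma and needs its own proof, presumably by the same kind of explicit differentiation the paper performs. Numerics strongly suggest the claim is true, so your route should work once this lemma is supplied, but as written the proposal is a plan whose key lemma is missing rather than a complete proof.
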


Thus, the regular cross-polytope is the minimiser for Problem~\ref{pr1}.

\begin{theorem}\label{th2}
Among the $n$-dimensional centred Gaussian random vectors $X$ satisfying $
\tr \;\cov \, X =1$, $\E \| X \|_\infty$ is maximal when $\cov \, X$ is
diagonal with at most two non-zero entries, and minimal when the absolute
values of the coordinates of $X$ are identical almost everywhere.
\end{theorem}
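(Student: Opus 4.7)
I would reduce Theorem~\ref{th2} to Theorem~\ref{th1} via two independent comparison arguments: the classical \v{S}id\'ak correlation inequality handles the maximum by transferring from an arbitrary centred Gaussian to one with independent coordinates of the same variances, and a pointwise lower bound on $\|X\|_\infty$ handles the minimum.

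For the maximum, let $X$ be centred Gaussian in $\R^n$ with $\tr\,\cov\,X = 1$, and put $\sigma_i=(\var X_i)^{1/2}$, so that $u=(\sigma_1,\dots,\sigma_n)$ is a unit vector in $\R^n_+$. \v{S}id\'ak's inequality asserts that
\[
\P(|X_1|\leq t,\dots,|X_n|\leq t)\geq \prod_{i=1}^n \P(|X_i|\leq t),\qquad t\geq 0.
\]
Introducing a standard normal vector $\xi$, the coordinates of $u\odot\xi$ are independent with the same marginal distributions as those of $X$, so the right-hand side above equals $\P(\|u\odot\xi\|_\infty\leq t)$. Integrating the tail bound in $t$ yields $\E\|X\|_\infty\leq\E\|u\odot\xi\|_\infty$, and Theorem~\ref{th1} applied to $u$ shows that the latter is maximised precisely when at most two coordinates of $u$ are non-zero. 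Translated back, this is exactly the diagonal rank-$\leq 2$ case of the statement.

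For the minimum the estimate $\|X\|_\infty\geq|X_i|$ immediately gives
\[
\E\|X\|_\infty\geq \max_{1\leq i\leq n}\E|X_i|=\sqrt{2/\pi}\,\max_i\sigma_i\geq \sqrt{2/(n\pi)},
\]
where the last step uses $\sum\sigma_i^2=1$. Equality in the first inequality requires $\|X\|_\infty=|X_{i^*}|$ a.s.\ for any $i^*$ attaining $\max_i\sigma_i$, so $|X_j|\leq|X_{i^*}|$ a.s.\ for every $j$; equality in the second forces every $\sigma_i$ to equal $1/\sqrt n$, so $\var X_j=\var X_{i^*}$ for all $j$. These two conditions combine into $|X_j|=|X_{i^*}|$ a.s., which is the extremal configuration of the statement.

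The only non-trivial ingredient is \v{S}id\'ak's inequality, so the main obstacle lies in the maximum part. The delicate step is the analysis of equality cases: equality in \v{S}id\'ak holds exactly for Gaussians with independent coordinates, and combined with the equality condition in Theorem~\ref{th1} this reproduces precisely the rank-$\leq 2$ diagonal Gaussians claimed in the statement.
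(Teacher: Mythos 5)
Your proposal is correct and follows essentially the same route as the paper: \v{S}id\'ak's inequality to compare with the independent-coordinate vector $u\odot\xi$ and then Theorem~\ref{th1} for the maximum, and the pointwise bound $\|X\|_\infty\geq|X_{i^*}|$ together with the equality analysis (all variances equal, $|X_j|=|X_{i^*}|$ a.s.) for the minimum. The only difference is cosmetic: your variance argument for upgrading $|X_j|\leq|X_{i^*}|$ to equality a.s.\ is a slightly more explicit version of the paper's remark, and the closing claim about the exact equality case in \v{S}id\'ak is extra (and unproved) but not needed for the statement as given.
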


We note that among symmetric convex bodies $K$ in
$\R^n$ in John position (that is, $B_2^n$ is the maximal volume ellipsoid inscribed in $K$),  $\int_{S^{n-1}} \| x\|_K d \sigma(x)$ is minimal for the
cube \cite{Sch95}, see also \cite{B}, pp. 52--53. It also follows that the
cube has minimal mean width among its affine images of the same volume
 \cite{GMR}. This, however, does not imply the above results, as the cube
has the smallest volume among the rectangular boxes to be considered in the
present problem.

 Next, we derive a lower estimate for $V_1(K)$ from Theorem~\ref{th1}.
 Let $\mu$ be the median of $\|x\|_\infty$ on $\R^n$ with respect to the
standard Gaussian measure. For the reader's convenience, we illustrate how to estimate $\mu$, following Ball \cite{B}, pp. 52.  On the one hand, \eqref{v1ex} implies that
\begin{align*}\label{v1est}
V_1\left(C_n\left(\frac 1 {\sqrt{n}} , \dots, \frac 1 {\sqrt{n}}\right) \right)
= \sqrt {\frac {2 \pi}{n}} \frac 1 {(2 \pi)^{n/2}} \int_{\R^n} e^{- |x|^2 / 2}
\, \|  x \|_\infty \, dx > \sqrt {\frac { \pi}{ 2 n}} \, \mu \,.
\end{align*}
On the other hand, $\mu$ satisfies that
\[
\frac 1 2 = \frac 1 {(2 \pi)^{n/2}} \int_{[-\mu,\mu]^n} e^{- |x|^2/2} dx =
\left( \sqrt{\frac 2 \pi} \int _0 ^\mu e^{-s^2 / 2} ds  \right)^n \approx (1 -
e^{-\mu^2/2})^n\,,
\]
thus, from $2^{-1/n} \approx 1 - (\log 2) /n$ we deduce that $\mu \approx
\sqrt{2 \log n}$, and
\begin{equation}\label{enestimate}
V_1\left(C_n\left(\frac 1 {\sqrt{n}} , \dots, \frac 1 {\sqrt{n}}\right) \right)
\approx \sqrt{\pi} \sqrt{\frac {\log n} n} \,.
\end{equation}

Taking  \eqref{ViC} into account, we arrive at the following estimate.
\begin{cor}
There exists an absolute constant $c$, so that for any convex body $K \subset
\R^n$,
\[
V_1(K) \geq c \sqrt{\frac {\log n} n} \sqrt{r_1(K)^2 + \dots + r_n(K)^2}.
\]
\end{cor}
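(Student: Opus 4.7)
The corollary follows essentially by concatenating facts already available in the excerpt. Set $r_i := r_i(K)$ and $R := \sqrt{r_1^2 + \cdots + r_n^2}$, and let $u := (r_1/R, \dots, r_n/R) \in S^{n-1}$. I first apply the Steiner-symmetrisation inequality \eqref{ViC}, reducing matters to estimating $V_1(C_n(r_1, \dots, r_n))$ from below. Since $V_1$ is $1$-homogeneous and $C_n(\lambda \mu_1, \dots, \lambda \mu_n) = \lambda\, C_n(\mu_1, \dots, \mu_n)$, this quantity equals $R \cdot V_1(C_n(u_1, \dots, u_n))$.

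The identification \eqref{v1ex} rewrites $V_1(C_n(u_1, \dots, u_n))$ as $\sqrt{2\pi}\, \E \|u \odot \xi\|_\infty$, and Theorem~\ref{th1} tells us that this expectation is minimised over $u \in S^{n-1}$ at $u = (1/\sqrt n, \dots, 1/\sqrt n)$. Combining these observations yields
\[
V_1(K) \;\geq\; R \cdot V_1\!\left(C_n\!\left(\tfrac1{\sqrt n}, \dots, \tfrac1{\sqrt n}\right)\right).
\]

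All that remains is to show that, for some absolute constant $c > 0$ and all $n$, the regular cross-polytope satisfies $V_1(C_n(1/\sqrt n, \dots, 1/\sqrt n)) \geq c \sqrt{\log n / n}$. This is precisely the content of the derivation placed immediately before the corollary in the excerpt: one uses \eqref{v1ex}, compares $\E \|\xi\|_\infty$ with the Gaussian median $\mu$ of $\|x\|_\infty$, and then bounds $\mu$ from below via the defining identity $\tfrac12 = (\sqrt{2/\pi}\int_0^\mu e^{-s^2/2}\,ds)^n$. The only step beyond bookkeeping is upgrading the asymptotic sign in \eqref{enestimate} to an honest inequality $\mu \geq c' \sqrt{\log n}$; this amounts to two lines of elementary Gaussian calculus, and the finitely many small values of $n$ are absorbed into $c$. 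I expect no genuine obstacle: the substantive content of the corollary has already been discharged by Theorem~\ref{th1}, and the present argument is pure assembly.
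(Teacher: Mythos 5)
Your proposal is correct and follows essentially the same route as the paper: reduce via the Steiner-symmetrisation bound \eqref{ViC} to the cross-polytope $C_n(r_1,\dots,r_n)$, use homogeneity and Theorem~\ref{th1} to pass to the regular cross-polytope, and invoke the Gaussian-median estimate \eqref{enestimate} (made quantitative, $\mu \geq c'\sqrt{\log n}$) for the lower bound $c\sqrt{\log n/n}$. This is precisely the assembly the paper performs in the paragraph preceding the corollary.
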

\noindent Numerical calculations show that the value of  $c$ can be chosen to
be $1.74$. This estimate, however, is not optimal, because the approximating orthogonal cross-polytope does not cover $K$ (e.g. the successive inner radii of the regular cross-polytope form a strictly decreasing sequence). In fact, the authors of \cite{HC08} conjecture that the sharp lower bound in terms of the inner radii should be $2\sqrt{r_1(K)^2 + \dots + r_n(K)^2}$; setting $K = C_n(\mu, \mu^2, \ldots, \mu^n)$ for $\mu$ large shows that this bound would be the best possible.

\newpage

\section{Proofs}

We start with a technical lemma.
\begin{lemma}\label{lemm1}
For any $0< q \leq 2$, the function
\[ F(x) = \frac {e^{x^2  / 2}} {x^{q-1}}  \int_0 ^{x
} e^{- t^2/2} dt
\]
is strictly increasing for $x >0$.
\end{lemma}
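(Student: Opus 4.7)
The plan is to show that the logarithmic derivative $(\log F)'$ is strictly positive on $(0, \infty)$. Write $G(x) := \int_0^x e^{-t^2/2}\,dt$, so that $F(x) = e^{x^2/2}\, x^{1-q}\, G(x)$ and $G'(x) = e^{-x^2/2}$. A direct computation gives
\[
(\log F)'(x) \;=\; x \;+\; \frac{1-q}{x} \;+\; \frac{e^{-x^2/2}}{G(x)}\,.
\]
Multiplying through by $x\,G(x) > 0$, the desired strict positivity becomes equivalent to the single inequality
\[
G(x)\,(x^2 + 1 - q) \;+\; x\,e^{-x^2/2} \;>\; 0, \qquad x > 0.
\]

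If $x^2 \geq q-1$, both summands are nonnegative and the second is strictly positive, so the inequality is immediate. The only substantive case is thus $0 < x < \sqrt{q-1}$, which can only occur when $1 < q \leq 2$. There the inequality to prove rearranges to $x\,e^{-x^2/2} > G(x)\,(q-1-x^2)$.

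For that range I would introduce
\[
H(x) := x\, e^{-x^2/2} \;-\; G(x)\,(q - 1 - x^2),
\]
note that $H(0) = 0$, and differentiate once more. A short calculation using $G'(x) = e^{-x^2/2}$ produces cancellations that yield the clean expression
\[
H'(x) \;=\; (2-q)\, e^{-x^2/2} \;+\; 2x\, G(x).
\]
The hypothesis $q \leq 2$ makes the first summand nonnegative, while the second is strictly positive for $x > 0$, so $H'(x) > 0$ on $(0, \infty)$. Integrating from $0$ gives $H(x) > 0$ for $x > 0$, closing the remaining case.

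The main obstacle is the regime $x \to 0^+$ with $q$ close to $2$, where both sides of the target inequality vanish and naive comparison of the asymptotics of $G(x)$ and $e^{-x^2/2}$ is too crude; passing to the auxiliary function $H$ and differentiating again is the cleanest way to bypass this. It is also the one place where the assumption $q \leq 2$ is used, since it is precisely what keeps the coefficient $(2-q)$ in $H'$ nonnegative. This suggests that the bound $q \leq 2$ is sharp for this line of argument.
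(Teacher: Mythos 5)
Your proof is correct and follows essentially the same route as the paper: after clearing positive factors, your auxiliary function $H(x)=x\,e^{-x^2/2}+(x^2+1-q)\int_0^x e^{-t^2/2}\,dt$ is exactly $e^{-x^2/2}$ times the function $f$ the paper extracts from $F'$, and both arguments conclude by noting the function vanishes at $0$ and has strictly positive derivative, with $q\leq 2$ used at that second differentiation. (Your case split is harmless but redundant, since the $H$ argument already covers all $x>0$.)
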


\begin{proof}
It is easy  to obtain that
\[
F'(x) = \frac 1 {x^{q}}  \left( x + (x^2 - q+1) e^{x^2/2} \int_0^x e^{-t^2/2}
dt \right) =: \frac 1 {x^{q}}\, f(x).
\]
Here $f(0)=0$ and
\[
f'(x) = x^2 + 2 - q+ x (x^2 + 3 - q) \, e^{x^2/2} \int_0^x e^{-t^2/2} dt
\]
which is positive for $x>0$. Thus, $F'(x)>0$ for every $x >0$.
\end{proof}

\begin{proof}[Proof of Theorem~\ref{th1}]
Since $\E \| u \odot \xi \|_\infty$ is a continuous function of $u$ on the unit sphere, it suffices to find the extremum values among the critical points on $S^{n-1}$ (the compactness of the unit sphere implies that the extrema exist). First, we show that the absolute values of the non-zero coordinates of the critical points are all equal. Because of symmetry, we may and do assume that $u_i \geq 0$ for every $i = 1, \dots, n$.

Using that
for a non-negative random variable $X$
\[
\E X= \int _0 ^\infty \P (X >t) dt = \int _0 ^\infty (1 - \P (X \leq t))\, dt\,,
\]
we can express the expectation in question as
\begin{align}\label{epsi}
\E \| u \odot \xi \|_\infty &=  \int_0^\infty \left( 1 - \prod_{i=1}^n \P(|u_i \xi_i|
\leq t)\right) dt \notag
\\
&=  \int_0^\infty \left( 1 - \left(\frac 2 \pi \right)^{n/2} \int_0 ^ {t/u_1}
e^{-s^2/2} ds \dots \int_0^{t/u_n}
 e^{-s^2/2} ds \, \right) dt\\
&= \int_0^\infty \left( 1  - \phi( t / u_1) \dots \phi(t/u_n) \, \right) dt, \notag
\end{align}
where
\[
\phi(a) = \erf\left( \frac a { \sqrt{2}} \right) =  {\sqrt{\frac 2 \pi}}
\int_0^a e^{-s^2/2} ds,
\]
also using the convention that $c/0 = \infty$ for $c \geq0$.

 When $u_1 \neq 0$, integrating by parts leads to
\begin{align}\label{part}
\frac {\partial \, \E \| u \odot \xi \|_\infty}{\partial u_1} &= \sqrt{\frac 2
\pi } \int_0^\infty \frac {t \, e^{ - t^2 / 2 u_1^2}} {u_1^2} \, \phi(t/u_2)
\dots \phi(t/u_n) \, dt \notag \\
&=\left[  - \sqrt{\frac 2 \pi } \, e^{ - t^2 / 2 u_1^2} \phi(t/u_2) \dots
\phi(t/u_n)\right]_0^\infty +\\
& \quad \quad+ \frac 2 \pi  \int_0^\infty e^{ - t^2 / 2 u_1^2} \sum _{i=2}^n
\left(\frac {e^{- t^2/2u_i^2}} {u_i } \prod_{\begin{subarray}{l} j=2\\
j \neq i
\end{subarray}}^n \phi(t/u_j)
\right)\,dt. \notag
\end{align}
Note that on the right hand side, the first summand vanishes.

Next, assume that $u$ is a critical point of $\E \| u \odot \xi \|_\infty$ on $S^{n-1}$ with at least two non-zero coordinates, say,  $u_1 \geq u_2 > 0$. Keeping all the other coordinates fixed and applying the Lagrange
multiplier method to the restricted function, we obtain that
\[
\frac 1 {u_1} \, \frac {\partial \, \E \| u \odot \xi \|_\infty}{  \partial
u_1} = \frac 1 {u_2} \, \frac {\partial \, \E \| u \odot \xi \|_\infty}{
\partial u_2}\, .
\]
Along with \eqref{part}, this implies that
\begin{equation}\label{parteq}
\begin{split}
\int_0^\infty \frac{e^{ - t^2 / 2 u_1^2}}{u_1} &\sum _{i=3}^n
\left(\frac {e^{- t^2/2u_i^2}} {u_i } \prod_{\begin{subarray}{l} j=2\\
j \neq i
\end{subarray}}^n \phi(t/u_j)
\right)\,dt \\
&= \int_0^\infty \frac{e^{ - t^2 / 2 u_2^2}}{u_2} \sum _{i=3}^n
\left(\frac {e^{- t^2/2u_i^2}} {u_i } \, \phi(t/u_1) \prod_{\begin{subarray}{l} j=3\\
j \neq i \end{subarray}}^n \phi(t/u_j) \right)\,dt.
\end{split}
\end{equation}
The quotient of the above integrands is
\[
\frac {u_2}{u_1} \frac{e^{ - t^2 / 2 u_1^2}}{e^{ - t^2 / 2 u_2^2}} \frac{\phi(t
/ u_2)}{\phi(t/u_1)} = \frac s {\phi(s)}\, e^{-s^2/2} \, \frac {e^ {(\mu s)^2 /
2}}{\mu s} \, \phi (\mu s),
\]
where $s= t / u_1$ and $\mu = u_1/u_2$. Setting $q=2$ in Lemma~\ref{lemm1}
implies that for any fixed $s>0$, this is a strictly increasing
function of $\mu$. In particular, $\mu>1$ would imply that the quotient is
strictly greater than 1 for every $s>0$. Thus, equality in \eqref{parteq} can
hold only if $\mu =1$, that is, $u_1 = u_2$.

Therefore, all the non-zero coordinates of the extremal vectors $u$ must be
equal, and in order to find the minimum and the maximum values of $\E \| u
\odot \xi \|_\infty$, it suffices to compute the expectations for the set of
points
\begin{equation*}\label{ueq}
u^k = (\underbrace{1/\sqrt{k}, \dots, 1/\sqrt{k}}_{k}, \underbrace{0, \dots,
0}_{n-k}), \quad k=1, \dots, n.
\end{equation*}
Introducing the notation
\[
E_k = \frac 1 {\sqrt{k}}\, \E \| (\xi_1, \dots, \xi_k) \|_\infty\, ,
\]
our goal is to show that
\begin{equation*}
E_1 = E_2 > E_3 > \dots > E_n.
\end{equation*}
Note that \eqref{v1ex} and \eqref{enestimate} imply that the above inequality
is asymptotically true, as $E_n \approx \sqrt{\log n / 2n}$.

Fix $n \geq 1$, and for $0 \leq \rho \leq 1/\sqrt{n+1}$, introduce
\[
u(\rho) = \left(\sqrt{\frac{1 - \rho^2} n}, \dots, \sqrt{\frac{1 - \rho^2} n},
\rho \right) \in \R^{n+1} \,.
\]
Then, by \eqref{epsi},
\[
\E \| u(\rho) \odot \xi \|_\infty = \int_0^\infty \left( 1 - \phi\left(\frac t
\rho\right) \phi \left(t \sqrt{\frac n {1 - \rho^2}}\,\right)^n \,\right)  dt =:
R(\rho).\]
This leads to
\begin{align*}
R'(\rho) &= \sqrt{\frac 2 \pi } \int_0^\infty \frac t {\rho^2}\, e^{- t^2 / 2
\rho^2}  \phi \left(t \sqrt{\frac n {1 - \rho^2}}\,\right)^n dt \\
& \quad - \sqrt{\frac 2 \pi } \int_0^\infty \frac {t \rho n^{3/2}}{(1 -
\rho^2)^{3/2}} e^{ - n t^2 / 2 (1 - \rho^2)} \phi\left(\frac t \rho\right) \phi
\left(t \sqrt{\frac n {1 - \rho^2}}\,\right)^{n-1} dt\,.
\end{align*}
By partial integration,
\begin{multline*}
\int_0^\infty \frac t {\rho^2}\, e^{- t^2 / 2 \rho^2}  \phi \left(t \sqrt{\frac
n {1 - \rho^2}}\,\right)^n dt  = \left[  - e^{- t^2 / 2 \rho^2} \phi \left(t
\sqrt{\frac n {1 - \rho^2}}\,\right)^n \right]_0^\infty\\
+ \sqrt{\frac 2 \pi} \int_0^\infty  e^{- t^2 / 2 \rho^2} n \sqrt{\frac n {1 -
\rho^2}} e^{ - n t^2 / 2 (1 - \rho^2)} \phi \left(t \sqrt{\frac n {1 -
\rho^2}}\,\right)^{n-1} dt\,,
\end{multline*}
where the first term on the right hand side vanishes. Similarly,
\begin{align*}
\int_0^\infty &\frac {t \, \rho\, n^{3/2}}{(1 - \rho^2)^{3/2}} \, e^{ - n t^2 /
2 (1 - \rho^2)} \phi\left(\frac t \rho\right) \phi \left(t \sqrt{\frac n {1 -
\rho^2}}\,\right)^{n-1} dt \\
&= \left[ -e^{ - n t^2 / 2 (1 - \rho^2)} \rho \sqrt{\frac n {1 -\rho^2}}\,
\phi\left(\frac t \rho\right) \phi \left(t \sqrt{\frac n {1 -
\rho^2}}\,\right)^{n-1}\right]_0^\infty\\
&+ \sqrt{\frac 2 \pi} \int_0^\infty e^{ - n t^2 / 2 (1 - \rho^2)} \sqrt{\frac n
{1 -\rho^2}}\, e^{-t^2/ 2 \rho^2} \phi \left(t \sqrt{\frac n {1 -
\rho^2}}\,\right)^{n-1} dt\\
&+ \sqrt{\frac 2 \pi} \int_0^\infty  e^{ - n t^2 /  (1 - \rho^2)} \rho \frac
{n(n-1)} {1-\rho^2} \, \phi\left(\frac t \rho\right) \phi \left(t \sqrt{\frac n
{1 - \rho^2}}\,\right)^{n-2}dt \,.
\end{align*}
The above equations lead to
\begin{multline}\label{rder}
R'(\rho) = \frac 2 \pi \int_0^\infty e^{ - n t^2 / 2 (1 - \rho^2)}(n-1)
\sqrt{\frac n {1 -\rho^2}} \, \phi \left(t \sqrt{\frac n {1 -
\rho^2}}\,\right)^{n-2}
\\
\cdot \left[  e^{- t^2 / 2 \rho^2} \,  \phi \left(t \sqrt{\frac n {1 -
\rho^2}}\,\right) -  e^{ - n t^2 / 2 (1 - \rho^2)} \rho \sqrt{\frac n {1
-\rho^2}}  \, \phi\left(\frac t \rho\right) \right] dt \,.
\end{multline}
Since $1/\rho \geq \sqrt{n /(1 - \rho^2)}$, Lemma~\ref{lemm1} ($q=2$) implies
that for every $t>0$
\[
\frac 1 {t \sqrt{n / (1 - \rho^2)}} \, e^{  n t^2 / 2 (1 - \rho^2)}  \phi
\left(t \sqrt{\frac n {1 - \rho^2}}\,\right) \leq \frac \rho t \, e^{ t^2 / 2
\rho^2} \,  \phi\left(\frac t \rho\right)\,,
\]
which, by \eqref{rder}, shows that for any $t>0$, the function $R(\rho)$ is
decreasing on the interval $[0, 1/\sqrt{n+1} ]$. Hence, for every $n
\geq 1$,
\[
E_n = R(0) \geq R \left( \frac 1 {\sqrt{n+1}} \right) = E_{n+1}\,.
\]
Furthermore, \eqref{rder} shows that equality holds above if and only if $n=1$.
\end{proof}

\begin{proof}[Proof of Theorem~\ref{th2}]
Let $u_i = \sqrt{(\cov X)_{ii}}$. For the minimum inequality, we may and do
assume that $u_1^2$ is the largest diagonal entry of $\cov X$, hence $u_1 \geq
1/\sqrt{n}$. Since $\|X\|_\infty \geq |X_1|$, and $\E |X_1| = \sqrt{2/\pi}\,
u_1$, we obtain the lower bound $\E \|X\|_\infty \geq \sqrt{2/(n \pi)} $. This
bound is sharp only if for all $i$, $u_i = 1/\sqrt{n}$ and $\|X\|_{\infty} =
|X_i|$ almost everywhere, which yields that for every $i$ and $j$,  $|X_i| =
|X_j|$ almost everywhere.

For the upper bound, let $\xi_1, \dots, \xi_n$ be i.i.d standard normal
variables. By  a theorem of \v{S}id\'ak (\cite{S67}, \cite{S68}), which is a
relative of Slepian's lemma,
\begin{multline*}
\P (\|X\|_\infty \leq t)  = \P (|X_1| \leq t, \dots, |X_n| \leq t) \\\geq
\prod_{i=1}^n \P(|X_i| \leq t) = \prod_{i=1}^n \P(|u_i \xi_i| \leq t).
\end{multline*}
Thus, the question reduces to Problem~\ref{pr2}, and the upper bound provided
by Theorem~\ref{th1} is sharp if the coordinate variables of $X$ are
independent.
\end{proof}

\section{Further remarks}

Theorem~\ref{th1} may also be proved by induction on $n$; the inductive
statement  asserts that for any constant $C\geq0$, the quantity
\[
\int_{\R^n} \max\{C, \|u \odot x\|_\infty\} d \gamma(x),
\]
 where $\gamma$ is the standard $n$-variate Gaussian distribution, is maximal
for $u = (1, 0, \dots, 0)$, and minimal for $ u = (1/\sqrt{n}, \dots,
1/\sqrt{n})$. The initial step is the $n=2$ case. After determining the
possible extremum points using Lagrange multipliers, the remaining statement
amounts to the following.

\begin{lemma}\label{lem2}
For any $c\geq 0$, the following inequality holds:
\[
c + \int_c^\infty ( 1 - (\phi( \sqrt{2} t ))^2  ) \,dt \leq c + \int_c^\infty ( 1 -
\phi (t)) \, dt = \sqrt{\frac 2 \pi}\, e^{ - c^2/2} + c \, \phi(c) \, .
\]
\end{lemma}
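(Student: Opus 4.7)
My plan has two parts. The right-hand equality is routine: integrating $\int_c^\infty (1-\phi(t))\,dt$ by parts with $u=1-\phi(t)$ and $dv=dt$, the boundary term contributes $-c(1-\phi(c))$, while $\int_c^\infty t\,\phi'(t)\,dt = \sqrt{2/\pi}\int_c^\infty t\,e^{-t^2/2}\,dt = \sqrt{2/\pi}\,e^{-c^2/2}$; adding back the leading $c$ gives the claimed closed form.

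For the inequality, the first step is a probabilistic reformulation. Let $\xi_1,\xi_2$ be i.i.d.\ standard normal and set $M := \max(|\xi_1|,|\xi_2|)/\sqrt 2$. Since $\P(|\xi_1|\leq t) = \phi(t)$ and $\P(M\leq t)=\phi(\sqrt 2\,t)^2$, the tail formula $\E(X-c)_+=\int_c^\infty \P(X>t)\,dt$ rewrites the two sides of the inequality as $\E\max(c,M)$ and $\E\max(c,|\xi_1|)$ respectively, so the content of the lemma is
\[
\E\max(c,M) \;\leq\; \E\max(c,|\xi_1|).
\]

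The decisive step is an orthogonal rotation: setting $\eta_1:=(\xi_1+\xi_2)/\sqrt 2$ and $\eta_2:=(\xi_1-\xi_2)/\sqrt 2$ produces two further i.i.d.\ standard normals, and the elementary identity $|a+b|+|a-b|=2\max(|a|,|b|)$ yields
\[
M \;=\; \frac{|\xi_1+\xi_2|+|\xi_1-\xi_2|}{2\sqrt 2} \;=\; \frac{|\eta_1|+|\eta_2|}{2}.
\]
Thus $M$ has the distribution of the arithmetic mean of two i.i.d.\ half-normals. Since $x\mapsto \max(c,x)$ is convex, Jensen's inequality gives pointwise $\max(c,(|\eta_1|+|\eta_2|)/2) \leq \bigl(\max(c,|\eta_1|)+\max(c,|\eta_2|)\bigr)/2$, and taking expectations together with the identical distribution of $\eta_1,\eta_2$ yields $\E\max(c,M) \leq \E\max(c,|\eta_1|) = \E\max(c,|\xi_1|)$.

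The main obstacle, as I see it, is spotting the rotation identity that presents $M$ as the average of two i.i.d.\ half-normals; once that is in hand, Jensen does all the work in a single line. A direct analytic alternative---showing that $\phi(\sqrt 2\,t)^2-\phi(t)$ has a unique zero on $(0,\infty)$ and combining this with $\int_0^\infty[\phi(\sqrt 2\,t)^2-\phi(t)]\,dt = 0$ (which follows from the $n=2$ case of Theorem~\ref{th1})---would likely invoke Lemma~\ref{lemm1}, but is considerably less transparent.
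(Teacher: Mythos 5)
Your proof is correct, and it is genuinely different from (and cleaner than) what the paper has in mind: the author explicitly omits the argument, describing it as a ``somewhat technical'' verification by taking second derivatives and analysing the resulting functions, i.e.\ a purely calculus-based comparison of the two sides as functions of $c$. You instead rewrite both sides via the tail formula $\E\max(c,X)=c+\int_c^\infty \P(X>t)\,dt$, so the claim becomes $\E\max(c,M)\leq\E\max(c,|\xi_1|)$ with $M=\max(|\xi_1|,|\xi_2|)/\sqrt2$; then the rotation $\eta_1=(\xi_1+\xi_2)/\sqrt2$, $\eta_2=(\xi_1-\xi_2)/\sqrt2$ together with the identity $|a+b|+|a-b|=2\max(|a|,|b|)$ exhibits $M=(|\eta_1|+|\eta_2|)/2$ pointwise as the mean of two i.i.d.\ half-normals, and Jensen applied to the convex function $x\mapsto\max(c,x)$ finishes in one line. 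All steps check out: the integration by parts for the closed form is fine (the boundary term $t(1-\phi(t))$ vanishes at infinity by the Gaussian tail bound), $\eta_1,\eta_2$ are indeed i.i.d.\ standard normal, and the identification $\P(M\leq t)=\phi(\sqrt2\,t)^2$ matches the left-hand integrand. What your route buys is transparency and a built-in explanation of the equality case: at $c=0$ the function $\max(0,\cdot)$ is linear on $[0,\infty)$, so Jensen is an equality, recovering $E_1=E_2$ from Theorem~\ref{th1}; the paper's second-derivative approach would obtain this only after a more laborious function analysis, though it is the template the author suggests for readers wishing to verify the lemma directly.
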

\noindent
We choose not to include the somewhat technical proof here; the interested reader can verify the statement by taking second derivatives and analysing the functions.

There are two natural directions to generalise the above results. First,
Problem~\ref{pr1} is open for $2 \leq i \leq n-3$. A method similar to the one
presented here may be applied to these cases as well; however, when computing
the mixed volumes of cross-polytopes, one faces  a formula (see e.g. Corollary
2.1 of \cite{HC08}) which is too complicated to carry out the necessary
analysis. It may be possible to express the mixed volumes in a more suitable
way; in that respect, it is illustrative that \eqref{v1ex} differs from the
$i=1$ case of the above cited formula.

The other direction is  to generalise Problem~\ref{pr2} the following way.

\begin{problem}\label{pr4}
Let $n \geq 1$,  $p,q \in (1, \infty]$, and let $\xi$ be an $n$-variate
Gaussian vector with i.i.d. standard normal coordinate variables. Determine the
maximum and minimum of $\E \| u \odot \xi \|_p $ subject to the condition
$\|u\|_q=1$.
\end{problem}

Clearly, in the $p = q = 1$ case the expectation is independent of the choice
of $u$, whereas Theorem~\ref{th1} provides the answer to the $q=2$, $p =
\infty$ case. We now extend this for $0<q<2$ as well.
\begin{theorem}
For $p = \infty$ and $0< q \leq 2$, the answer to Problem~\ref{pr4} is given
as follows: $\E \| u \odot \xi \|_\infty $ is maximised by $u = (1, 0, \dots,
0)$, and it is minimised by $u = (n^{-1/q}, \dots, n^{-1/q})$.
\end{theorem}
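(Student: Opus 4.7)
The plan is to mirror the proof of Theorem~\ref{th1} in two stages: first, use Lagrange multipliers on the constraint $\sum_i u_i^q = 1$ to show that any critical point of $\E \|u \odot \xi\|_\infty$ has all its non-zero coordinates equal; second, compare the finitely many resulting critical values.

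For the Lagrange step, at a critical point with $u_1 \geq u_2 > 0$ the stationarity condition reads
\[
\frac{1}{u_1^{q-1}}\frac{\partial \, \E \|u \odot \xi\|_\infty}{\partial u_1} = \frac{1}{u_2^{q-1}}\frac{\partial \, \E \|u \odot \xi\|_\infty}{\partial u_2},
\]
and I would expand each side using \eqref{part}. The indices $i \geq 3$ should produce pairs of integrands whose difference, after the substitution $s = t/u_1$ and $\mu = u_1/u_2 \geq 1$, is a positive multiple of $F(\mu s) - F(s)$ (using the identity $e^{x^2/2}\phi(x) = \sqrt{2/\pi}\,x^{q-1}F(x)$); Lemma~\ref{lemm1} then makes this strictly positive whenever $\mu > 1$. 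The two exceptional summands ($i = 2$ on the LHS, $i = 1$ on the RHS) share the common integral $\int_0^\infty e^{-t^2/2u_1^2 - t^2/2u_2^2}\prod_{j=3}^n \phi(t/u_j)\,dt$, with coefficients $1/(u_1^{q-1}u_2)$ and $1/(u_2^{q-1}u_1)$ respectively, so their ratio equals $(u_1/u_2)^{2-q}$. For $0 < q \leq 2$ this factor is $\geq 1$, strictly when $u_1 > u_2$. Both contributions to LHS $-$ RHS therefore carry the same sign, and are strictly positive if $u_1 > u_2$, contradicting equality. Hence every critical point has all non-zero coordinates equal, and the extrema occur among the vectors $u^k = (k^{-1/q},\ldots,k^{-1/q},0,\ldots,0)$, $k = 1, \ldots, n$.

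For the second stage, set $M_k := \max_{i \leq k}|\xi_i|$ and $E_k^{(q)} := k^{-1/q}\E M_k$. Theorem~\ref{th1} is precisely the statement that $\E M_{k+1}/\E M_k \leq \sqrt{(k+1)/k}$ for every $k \geq 1$, with equality only at $k = 1$. For $0 < q < 2$, $((k+1)/k)^{1/q} > \sqrt{(k+1)/k}$ strictly, so
\[
\frac{E_k^{(q)}}{E_{k+1}^{(q)}} = \left(\frac{k+1}{k}\right)^{\!1/q}\frac{\E M_k}{\E M_{k+1}} \geq \left(\frac{k+1}{k}\right)^{\!1/q - 1/2} > 1,
\]
yielding the strict chain $E_1^{(q)} > E_2^{(q)} > \cdots > E_n^{(q)}$. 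The maximum is thus attained at $u = (1,0,\ldots,0)$ and the minimum at $u = (n^{-1/q},\ldots,n^{-1/q})$; for $q = 2$ the same display recovers $E_1^{(2)} = E_2^{(2)} > E_3^{(2)} > \cdots$, consistently with Theorem~\ref{th1}.

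The main obstacle will be the Lagrange step. The key observation is that the algebraic factor $(u_1/u_2)^{2-q}$ introduced by the $\ell^q$ normalization pushes in the same direction as the Lemma~\ref{lemm1}-based contribution precisely in the range $0 < q \leq 2$, so no cancellation can occur unless $u_1 = u_2$. Once this is in place the comparison of critical values is essentially a corollary of Theorem~\ref{th1}.
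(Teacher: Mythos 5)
Your proposal is correct and takes essentially the same route as the paper: the Lagrange-multiplier argument from Theorem~\ref{th1}, now combined with the general-$q$ case of Lemma~\ref{lemm1}, reduces the problem to the vectors $u_q^k$, after which the chain $E_{1,q}>E_{2,q}>\dots>E_{n,q}$ follows from $\bigl(\tfrac{k+1}{k}\bigr)^{1/q}>\bigl(\tfrac{k+1}{k}\bigr)^{1/2}\geq \E\|(\xi_1,\dots,\xi_{k+1})\|_\infty/\E\|(\xi_1,\dots,\xi_k)\|_\infty$, exactly as in the paper. The only difference is that you make explicit the treatment of the two exceptional summands via the factor $(u_1/u_2)^{2-q}\geq 1$, a detail the paper leaves implicit in the phrase ``the argument applied in the course of the proof of Theorem~\ref{th1}.''
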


\begin{proof}
The argument applied in the course of the proof of Theorem~\ref{th1}, together
with the general case of Lemma~\ref{lemm1} imply that the extremal vectors are
among the
 $u_q^k$ given by
\begin{equation*}
u_q^k = (\underbrace{k^{-1/q}, \dots, k^{-1/q}}_{k}, \underbrace{0, \dots,
0}_{n-k}), \quad k=1, \dots, n.
\end{equation*}
Introducing
\[
E_{k,q} = k^{-1/ q}\, \E \| (\xi_1, \dots, \xi_k) \|_\infty\, ,
\]
the chain of inequalities
\[
E_{1,q}> E_{2,q}> \dots > E_{n,q}
\]
easily follows by
\[
\left( \frac{k+1} k\right)^{1/q} > \left( \frac{k+1} k\right)^{1/2} \geq
\frac{\E \| (\xi_1, \dots, \xi_{k+1}) \|_\infty}{\E \| (\xi_1, \dots, \xi_k)
\|_\infty} \,. \qedhere
\]
\end{proof}

 It would be natural to expect that  for every $p$ and $q$, the behaviour of
the extremal vectors is similar to the above case.  However, this is very far
from being true. We illustrate this phenomenon by the $n=2$ and $p=2$ case,
where by elementary but tedious calculations one can show the following. Let
$q_L = 3/2$, $q_M = \log 2/(\log \pi - \log 2) \approx 1.53$ and $q_U = 2$. For
$1 \leq q \leq q_L$, the expectation is maximal when $ u = (1,0)$, and there is
exactly one local (and global) minimum at $(2^{-1/q}, 2^{-1/q})$. For $q_L < q
< q_U$, there is a local maximum at both of these directions, and there are two
further local (and global) minimum points. When $q =q_M$, these two maxima are
equal; for $q < q_M$, the global maximum is at $(1,0)$, whereas for $q>q_M$,
the global maximum is at $(2^{-1/q}, 2^{-1/q})$. For $q \geq q_U$, the vector
$(1,0)$ becomes a global minimum point, and the only local extremum points are
$(1,0)$, $(0,1)$ and $(2^{-1/q}, 2^{-1/q})$. For general $p$, a similar pattern
holds with $q_U=2$, and $q_L \rightarrow 2$ as $p \rightarrow \infty$.

The same situation occurs in higher dimensions, with $q_U$ depending on~$n$.
Thus, in particular, it is not true in general that the extremal vectors $u$
are necessarily of the form $(\alpha, \dots, \alpha, 0, \dots, 0)$, up to
permutation and sign changes of the coordinates.

For $p= \infty$ and $q>2$, the distribution of the minimum and maximum points
is unclear. Theorem~\ref{th1} and the discussion above show that for $q \leq
2$, the maximum is achieved at $(1,0,\dots,0)$, whereas the minimum is taken
when $u$ is parallel to $(1, \dots, 1)$. However, for $q=\infty$, the role of
these two directions is clearly swapped. Thus, there must be a transition phase
as $q \rightarrow \infty$, and it is plausible to expect that the behaviour of
the extremal points depends heavily on the dimension as well. That for any
$q>2$, whether all the extremal points have equal absolute values of the non-zero coordinates remains an open question.

\section{Acknowledgements}
I would like to thank F. Fodor and V. V\'igh for communicating the problem to me
and for the fruitful discussions and ideas; K. Ball, M. Henk, A. Litvak, and G.
Pisier for the useful advice; and the anonymous referee for the valuable suggestions. I am grateful for the hospitality of the Mathematical Sciences Research Institute, Berkeley, CA.


\begin{thebibliography}{99}

\bibitem{B} K. Ball, {\em An elementary introduction to convex geometry.} In:
Flavors of geometry, ed. S. Levy, Cambridge University Press, Cambridge, 1997.

\bibitem{BH93} U. Betke, M. Henk, {\em Intrinsic volumes and lattice points of
crosspolytopes.} Monatsch. Math. {\bf 115} (1993), no. 1--2, 27--33.

\bibitem{GMR} A. A. Giannopoulos, V. D. Milman and M. Rudelson, {\em Convex bodies with minimal
mean width.} GAFA, Lecture Notes in Mathematics, {\bf 1745} (2000), 81--93.

\bibitem{G07} P. Gruber, {\em Convex and Discrete Geometry.} Springer-Verlag,
2007.

\bibitem{HC08} M. Henk, M. Hern\'andez Cifre, {\em Intrinsic volumes and successive
radii.} J. Math.Analysis Appl.  {\bf 343} (2008), no. 2, 733-–742.

\bibitem{Sch95} G. Schechtman, M. Schmuckenschläger, {\em A concentration inequality for harmonic measures on the sphere.} Geometric aspects of functional analysis (Israel, 1992--1994), 255--273. Oper. Theory Adv. Appl. {\bf 77}, Birkhäuser, Basel, 1995.

\bibitem{Sch93} R. Schneider, {\em Convex bodies: the Brunn-Minkowski theory.}
Cambridge University Press, 1993.


\bibitem{S67} Z. \v{S}id\'ak, {\em Rectangular Confidence Regions for the Means of Multivariate Normal
Distributions.} J. Amer. Stat. Association, {\bf 62} (1967), no. 318, 626--633.

\bibitem{S68}  Z. \v{S}id\'ak, {\em On Multivariate Normal Probabilities of Rectangles: Their Dependence on
Correlations.} Ann. Math. Stat., {\bf  39} (1968), no. 5, 1425--1434.

\end{thebibliography}
\end{document}